\newcommand{\R}{\mathbb{R}}
\newcommand{\C}{\mathbb{C}}
\newcommand{\po}{\partial}
\newcommand{\ve}{\varepsilon}
\newcommand{\la}{\left\langle}
\newcommand{\ra}{\right\rangle}
\newcommand{\loc}{{\text{\rm loc}}}
\newcommand{\X}{\times}
\renewcommand{\d}{\delta}
\renewcommand{\l}{\lambda}
\renewcommand{\a}{\alpha}
\newcommand{\g}{\gamma} 
\newcommand{\z}{\zeta}
\newcommand{\supp}{\text{\rm supp}\,}
\newcommand{\M}{{\mathcal M}}
\renewcommand{\supp}{\text{\rm supp}\,}
\newcommand{\bbT}{{\mathbb T}}
\newcommand{\uu}{{\frak u}}
\newcommand{\fka}{{\frak a}}
\newcommand{\fkb}{{\frak b}}
\theoremstyle{plain}
\newtheorem{theorem}{Theorem}[section]
\theoremstyle{definition}
\newtheorem{definition}{Definition}[section]
\theoremstyle{remark}
\newtheorem{remark}{Remark}[section]
\numberwithin{equation}{section}
\begin{document}

\title[Relativistic short wave-long wave interactions]
{On short wave-long wave interactions in the relativistic context.}

\author{Jo\~ao Paulo Dias}
\thanks{J.P.\ Dias gratefully acknowledges the support from FCT (Funda\c c\~ao para a Ci\^encia  e a Tecnologia)
under the project UIDB/04561/2020}
\address{Departamento de Matem\'atica and CMAFcIO \\
Faculdade de Ci\^encias, Universidade de Lisboa\\
Campo Grande, Edif.\ C6, 1749-016, Lisboa, Portugal }
\email{jpdias@fc.ul.pt}

\author{Hermano Frid} 
\thanks{H.~Frid gratefully acknowledges the support from CNPq, 
through grant proc.\ 305097/2019-9, and FAPERJ, 
through grant proc.\ E-26/202.900-2017}

\address{Instituto de Matem\'atica Pura e Aplicada - IMPA\\
         Estrada Dona Castorina, 110 \\
         Rio de Janeiro, RJ 22460-320, Brazil}
\email{hermano@impa.br}

\date{}

\keywords{short wave-long wave interaction, Dirac equation, Klein-Gordon equation, relativistic Burgers equation, ABI equations}

\subjclass[2010]{35L65, 35Q41, 81Q05 }

\begin{abstract}  In this paper we introduce models of short wave-long wave interactions in the relativistic setting. In this context the nonlinear Schr\"odinger equation is no longer adequate for describing short waves and  is replaced by a nonlinear Dirac equation. Two specific examples are considered: the case where the long waves are governed by a  scalar conservation law; and the case where the long waves are governed by the augmented Born-Infeld equations in electromagnetism.   

\end{abstract}

\maketitle 

\section{Introduction} \label{S:1}

In \cite{Be}, Benney proposed a general theory describing interactions between short waves and long waves, in the classical non-relativistic context. More specifically,  in Benney's model short waves are  described  by a non-linear Schr\" odinger equation. As for the long waves, in \cite{Be} two examples are given: a linear transport equation, and the Burgers equation, namely, with some simplifications, these examples are 
\begin{equation*}
\begin{aligned}
& i u_t + u_{xx}=|u|^2 u + \a v u\\
& v_t+ c_1 v_x=(\a|u|^2)_x,
\end{aligned}
\end{equation*}
and
\begin{equation*}
\begin{aligned}
& i u_t + u_{xx}=|u|^2 u + \a v u\\
& v_t+ (\frac{v^2}2)_x=(\a|u|^2)_x,
\end{aligned}
\end{equation*} 
where $\a>0$ is a constant.
We recall, among works dedicated to the study of this original model in \cite{Be},  that the well-posedness for the linear case was addressed in \cite{TH}, while the case of the Burgers equation with
dispersion, that is, the KdV equation, was  addressed in \cite{BOP}. In \cite{DF2}, global existence for the Burgers flux with a cubic perturbation, $a v^2- bv^3$, $b>0$, was obtained. We denote the coupling prescribed in \cite{Be}
by $\left(\begin{matrix} v u\\ |u|^2\end{matrix}\right)$. An important improvement in the model set forth in \cite{Be} was achieved in \cite{DFF} where the coupling, in the case where long waves are described by scalar conservation laws, was prescribed as           
$\left(\begin{matrix} g(v) u\\ g'(v) |u|^2\end{matrix}\right)$, where $\supp g'$ may be suitably chosen so as to guarantee the
preservation of the physical domain. Moreover, the improvement proposed in \cite{DFF} also enabled the study of interactions with long waves governed by systems of conservation laws such as elasticity, electromagnetism, symmetric systems, etc. 
It also opened the way for the study of interactions with compressible fluids in \cite{DF}, followed by extensions to heat conductive fluids and magnetohydrodynamics equations (see, e.g., \cite{FPZ, FJP, FMP, DRM, FMN}). An important feature in the latter references for interactions with fluids is that the nonlinear Schr\" odinger equation governing the short waves is based on the Lagrangian coordinates of the fluid. Also, the coupling in these works on interactions involving fluids has the form $ \left(\begin{matrix} g(v) h'(|u|^2)u\\ g'(v) h(|u|^2)\end{matrix}\right)$, with $\supp h'$ compact in $[0,\infty)$. 

\medskip
In the relativistic context, the short waves can no longer be described by a  nonlinear Schr\"odinger equation since this type of equation yields infinite  speed of propagation, which violates the relativity principle that no signal can propagate with speed higher than the speed of light. The natural substitute for the Schr\"odinger equation is the Dirac equation proposed by Dirac (\cite{Di}) in search of compatibility between relativity and quantum theories.  On the other hand as a replacement for the nonlinear cubic Schr\"odiger equation there are different models of 
the nonlinear cubic Dirac equation (see, e.g., \cite{CG,Ba, CH,Th, Del, KM, Ca}).  
Here we will be concerned with the Thirring model proposed by Thirring in \cite{Th} whose mathematical study has been considered in several papers (see, e.g., \cite{Del, KM, DF1, Ca}). 
More specifically, here we only consider the zero mass case.   

For instance, in the relativistic context, using the massless  Thirring model,  the  simplest case
of the transport equation found in \cite{Be} and recalled as the first system above   would be recast as  
\begin{equation}\label{e1.0}
\begin{aligned}
&  \uu_t = \fka\, \uu_{x}-  i(\l U+\a v)\uu,\\
& v_t+ c_1 v_x=(\a |\uu|^2)_x,\\
\end{aligned}
\end{equation}
\begin{equation}\label{e1.00}
U=\uu^\dag\uu-\uu^\dag\fka\uu\fka,
\end{equation}
where, as usual for Dirac equations, $\uu\in \C^2$ and $\fka$ is a $2\X 2$ complex matrix satisfying 
$\fka^*:=\bar{\fka}^\top=\fka$ and $\fka^2=I$. Here, $\a>0, \l\in\R$ and $U$ is the Thirring quadratic matrix valued functional, where ${}^\dag$ means the conjugate transpose, i.e., if $\uu=\left(\begin{matrix}u_1\\u_2\end{matrix}\right)\in\C^2$, then $\uu^\dag=(\bar u_1, \bar u_2)$, in particular, $\uu^\dag\uu=|\uu|^2=(\Re u_1)^2+(\Im u_1)^2+(\Re u_2)^2+(\Im u_2)^2$.  The justification of this type of model follows from the  justification for the corresponding model in the non-relativistic case in \cite{Be}. 

In this connection, we recall that Dias and Figueira in \cite{DF1} established  an important 
property of the solution of a simplified version of the massless Thirring model (with $U=|\uu|^2$), which is the fact that $|\uu|^2$ solves the wave equation (see also \cite{MO}).   Here we extend this property to a general massless Thirring model (with $U$
as in \eqref{e1.100})  with any real-valued potencial $V(t,x)$, in particular, that is, a nonlinear Dirac equation of the form
\begin{equation}\label{e1.000}
 \uu_t = \fka\, \uu_{x}-  i(\l U+ V)\uu,
 \end{equation}
with $V=V(t,x)$ any real-valued function, possibly depending on $\uu$. Not only $|\uu|^2$ satisfies the wave equation, but this is true also for $(\uu^\dag\fka\uu)$. This observation by itself trivializes the solution of the Cauchy problem for \eqref{e1.0}. We will prove this general property in Section~\ref{S:2}. 

In Sections~\ref{S:3} and~\ref{S:4} of this paper, we apply the result in Section~\ref{S:2} to two examples of models for  relativistic short wave-long wave interactions, for two different types of long waves
propagation. The first application, discussed in Section~\ref{S:3}, is the case of a  scalar conservation law in the relativistic context such as the one introduced by LeFloch, Makhlof and Okutmustur in \cite{LFMO} (see also \cite{HW}).  In this case the system modeling the short wave-long wave interactions has the form
\begin{align*}
& {\uu}_t=\fka\, {\uu}_x-i(\l U+ g(v)){\frak u}, \\
& v_t+ f(t,v)_x+ h(t,v)=\a (g'(v)|\uu|^2)_x,
\end{align*}
whose details will be explained in Section~\ref{S:3}. 

The second application, discussed in Section~\ref{S:4},   is provided by the augmented Born-Infeld (ABI) equations, introduced by Brenier in \cite{Br}, in electromagnetism. 
This is a linearly degenerate $8\X8$ system which shares some features with compressible fluid equations. More specifically, the two first equations of the system form themselves a closed system independent from the other remaining 6 variables and has the same structure of the equations for the so called Chapligyn gas (see, e.g., \cite{Se}). From this similarity with compressible gas dynamics, it is natural that the model for the referred interactions should be based on the Lagrangian coordinates of the ABI system. The system modeling the short wave-long wave interactions then reduces to the following, whose details are explained in Section~\ref{S:4} :
 \begin{align*}
&{\uu}_t=\fka\, {\frak u}_y-i(\l U+\a_1g_1(\theta)+\a_2g_2(\z))\uu , \\
& \theta_t-Z \theta_y=\a_1 (g_1'(\theta)|\uu|^2)_y,\\
&\z_t+Z\z_y=\a_2 (g_2'(\z)|\uu|^2)_y.
\end{align*}

We would like to remark here that an important property of the models of relativistic short wave-long wave interactions discussed in Sections~\ref{S:3} and~\ref{S:4} is their stability in the sense
 that if we have a sequence of weak solutions bounded in the natural norms, that is, $L^2$ for $\uu$ and  $L^\infty$ for the long waves, described by $v$ in the first case and by $(\theta,\z)$ in the second, then any weak limit of this sequence is also a weak solution of the corresponding system. This is a trivial consequence of the way we obtain the weak solutions and the entropy inequalities they satisfy. We state and prove this stability property in our main theorems in Sections~\ref{S:3} and~\ref{S:4}.

\section{The main property of the massless Thirring model.}\label{S:2}

In this section we state and prove the main property of the massless Thirring model for the purposes of this paper, which extends the corresponding fact proved in \cite{DF1} for the
homogeneous simplified version of the Thirring model. 

Consider the equation
\begin{equation}\label{e1.100}
\uu_t-\fka \uu_x=-iA(t,x)\uu
\end{equation}
where $\uu\in\C^2$,   $\fka$ is a  $2\X 2$ complex matrix satisfying  
$\fka^*:=(\bar\fka)^\top=\fka$, $\fka^2=I$, and $A$ is a $2\X2$ complex matrix satisfying $A^*=A$, $\fka A=A \fka$. An example of $A$ satisfying these conditions comes from the massless Thirring model with 
$A= \l U+ V$, where $U$ is as in \eqref{e1.00} and $V$ is any real-valued function. 

\begin{theorem}\label{T:100} Under the above conditions both  $w=|\uu|^2$  and 
$w=\uu^\dag \fka \uu$ satisfy the wave equation
\begin{equation}\label{e1.99}
w_{tt}-w_{xx}=0.
\end{equation}
\end{theorem}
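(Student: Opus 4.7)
The plan is to introduce the two real-valued quantities
$$P=\uu^\dag\uu,\qquad Q=\uu^\dag\fka\uu,$$
and to show that the pair $(P,Q)$ satisfies the first-order symmetric system
$$P_t=Q_x,\qquad Q_t=P_x.$$
Once this linear coupling is in hand, cross-differentiation gives $P_{tt}=Q_{xt}=Q_{tx}=P_{xx}$, and similarly $Q_{tt}=P_{xx}\!\cdot$ is replaced by the analogous identity, so both $P$ and $Q$ solve the one-dimensional wave equation \eqref{e1.99}. Thus the problem reduces to verifying the two first-order identities, which is pure algebra using the hypotheses on $\fka$ and $A$.

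First I would differentiate $\uu^\dag$ in time using the conjugate transpose of \eqref{e1.100}: since $\fka^*=\fka$ and $A^*=A$, we get $\uu^\dag_t=\uu^\dag_x\fka+i\uu^\dag A$. Then
$$P_t=\uu^\dag_t\uu+\uu^\dag\uu_t=(\uu^\dag_x\fka+i\uu^\dag A)\uu+\uu^\dag(\fka\uu_x-iA\uu),$$
and the two potential terms $\pm i\uu^\dag A\uu$ cancel, leaving $P_t=\uu^\dag_x\fka\uu+\uu^\dag\fka\uu_x=(\uu^\dag\fka\uu)_x=Q_x$. Here self-adjointness of $\fka$ and $A$ is what makes the cancellation work.

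Next I would compute
$$Q_t=(\uu^\dag\fka\uu)_t=(\uu^\dag_x\fka+i\uu^\dag A)\fka\uu+\uu^\dag\fka(\fka\uu_x-iA\uu).$$
Using $\fka^2=I$ on the outer factors one gets $\uu^\dag_x\uu+\uu^\dag\uu_x=(|\uu|^2)_x=P_x$, and the two remaining potential contributions combine as $i\uu^\dag(A\fka-\fka A)\uu$, which vanishes by the commutation hypothesis $\fka A=A\fka$. This yields $Q_t=P_x$ and completes the reduction.

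The argument is algebraic and straightforward; the only thing to watch is which hypothesis is invoked where. Each assumption appears exactly once: $\fka^*=\fka$ and $A^*=A$ produce the correct form of $\uu^\dag_t$, the self-adjointness kills the potential in $P_t$, $\fka^2=I$ reduces the kinetic part of $Q_t$ to $(\uu^\dag\uu)_x$, and $[\fka,A]=0$ kills the potential in $Q_t$. One should also briefly observe that both $P$ and $Q$ are indeed real-valued (so that the wave equation is stated for a real scalar), which follows from $\fka^*=\fka$. No regularity issue arises at this stage since the identities are to be read in the distributional sense given the assumed regularity of the solution $\uu$.
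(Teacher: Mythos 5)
Your proposal is correct and follows essentially the same route as the paper: the two first-order identities $P_t=Q_x$ and $Q_t=P_x$ are exactly the conservation laws \eqref{e1.100'} and \eqref{e1.102'} derived there (by left-multiplying by $\uu^\dag$ and $\uu^\dag\fka$ and adding the conjugate-transposed equations, which is the same algebra as your explicit use of $\uu^\dag_t=\uu^\dag_x\fka+i\uu^\dag A$), and the cross-differentiation step is identical.
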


\begin{proof} Then, multiplying \eqref{e1.100} by $\uu^\dag$ to the left
\begin{equation*}
\uu^\dag u_t-\uu^\dag\fka \uu_x=-i\uu^\dag A(t,x) \uu,
\end{equation*}
 applying ${}^\dag$ to the last equation
 $$
  \uu_t^\dag \uu  -\uu_x^\dag\fka \uu =i\uu^\dag A(t,x)\uu,
 $$
 adding the last two gives
 \begin{equation}\label{e1.100'}
 (|\uu|^2)_t-(\uu^\dag\fka \uu)_x=0.
 \end{equation}
Deriving the last equation by $t$, it follows
\begin{equation}\label{e1.102}
(|\uu|^2)_{tt}-(\uu^\dag\fka \uu)_{tx}=0.
\end{equation}
Similarly, multiplying \eqref{e1.100} by $\uu^\dag\fka$, it follows,
\begin{equation*}
\uu^\dag\fka \uu_t-\uu^\dag \uu_x=-i\uu^\dag\fka A(t,x)\uu,
\end{equation*}  
applying ${}^\dag$ to the last equation
\begin{equation*}
 u_t^\dag\fka \uu-\uu_x^\dag\uu=i\uu^\dag A(t,x) \fka \uu,
\end{equation*}  
adding the last two gives
\begin{equation}\label{e1.102'}
(\uu^\dag \fka \uu)_t-(|\uu|^2)_x=0.
 \end{equation}
Deriving the last equation by $x$, it follows
\begin{equation}\label{e1.103}
(\uu^\dag \fka \uu)_{xt}-(|\uu|^2)_{xx}=0.
\end{equation}
{}From \eqref{e1.102} and \eqref{e1.103} it follows
\begin{equation}\label{e1.104}
(|\uu|^2)_{tt}-(|\uu|^2)_{xx}=0.
\end{equation}
Similarly, deriving \eqref{e1.100'} with respect to $x$,  \eqref{e1.102'} with respect to $t$ and adding the
resulting equations we arrive at
\begin{equation}\label{e1.104'}
(\uu^\dag\fka\uu)_{tt}-(\uu^\dag\fka\uu)_{xx}=0,
\end{equation}
which proves the assertion for $\uu^\dag\fka\uu$ and concludes the proof. 

\end{proof}

\begin{remark}\label{R:1.1} Solving the wave equation, taking into account \eqref{e1.100'}, we obtain
\begin{multline}\label{e1.1000} 
|\uu|^2(t,x)=\frac12[|\uu(0,x+t)|^2+(\uu^\dag\fka\uu)(0,x+t)]\\
+\frac12[|\uu(0,x-t)|^2-(\uu^\dag\fka\uu)(0,x-t)].
\end{multline}
This formula shows  that $|\uu(t,x)|^2\ge0$, for all $t>0$,  as it should be, where we have used the fact that $|\fka \uu|=|\uu|$, and so $|(\uu^\dag\fka\uu)|\le |\uu|^2$. Moreover, if 
\begin{equation}\label{e1.IC}
|\uu(0,x)|^2 +(\uu^\dag\fka\uu)(0,x)>0,\quad \text{for all $x\in\R$}, 
\end{equation}
then $|\uu(t,x)|^2>0$, for all $t>0$. For instance, for 
$$
\fka=\left(\begin{matrix} 1&0\\0&-1\end{matrix}\right),
$$
as in \cite{Ca,KM},  if $\uu=(u_1,u_2)$, then  condition \eqref{e1.IC} is true if $|u_1(0,x)|^2>0$.

Similarly, using \eqref{e1.102'}, we arrive at the formula 
\begin{multline}\label{e1.1000'} 
(\uu^\dag\fka\uu)(t,x)=\frac12[|\uu(0,x+t)|^2+(\uu^\dag\fka\uu)(0,x+t)]\\
-\frac12[|\uu(0,x-t)|^2-(\uu^\dag\fka\uu)(0,x-t)].
\end{multline}

\end{remark} 

\begin{remark}\label{R:1.2} By the previous remark, in particular the formula \eqref{e1.1000}, 
the solution of the initial value problem for \eqref{e1.0} is trivial, as follows. We first solve the linear transport equation for $v$, with $|\uu|^2$ as a given right-hand side, obtained from \eqref{e1.1000}. Then, having found $v(t,x)$, we solve the equation for $\uu$, using the fact that \eqref{e1.1000} and \eqref{e1.1000'} determine $U$ everywhere, and so we can obtain   
$\uu$ by using the unitary group associated with the skew-adjoint operator $\fka\po_x$, with domain $H^1(\R)$, $S(t)=\operatorname{exp}\left(\fka \frac{\po}{\po x}\right)t$, and solving the Duhamel's equation by 
a standard  fixed point argument in $C([0,\infty); L^2(\R))$. We will give a bit more details about the solution of the equation for $\uu$ in the next section. 
\end{remark}

We take this opportunity to state and prove an extension of the above result to massless Dirac equations in three space dimensions as follows. 
Let us consider the equation  
\begin{equation}\label{e1.105}
\uu_t-\fka_1 \uu_x -\fka_2 \uu_y-\fka_3 \uu_z=-i B(t,x,y,z) \uu,
\end{equation}
where $\uu=\uu(t,x) \in\C^4$, $\fka_i$, $i=1,2,3$, are $4\X4$ complex matrices satisfying 
$\fka_i^*=\fka_i$, $\fka_i^2=I$, $\fka_i\fka_j=-\fka_j\fka_i$, $i\ne j$, $i,j=1,2,3$,
and  $B(t,x,y,z)$ is a $4\X4$ complex matrix such that $B^*=B$ and $\fka_i B=B\fka_i$, $i=1,2,3$.
Equations such as \eqref{e1.105} were proposed by R.T.~Glassey, as cited by Strauss in \cite{St}, p.245, where
$B=\l |\uu|^{p-1}I$, $p>1$, $\l\in\R$, and $I$ is the $4\X4$ identity matrix.  Also, \eqref{e1.105} includes a $1+3$-dimensional extension of the massless Thirring model, where 
$B(t,x,y,z)=\l\bar U+V(t,x,y,z)$ with $\bar U=\uu^\dag\uu-\uu^\dag\fkb\uu\fkb$  such that
$$
\fkb=i\fka_1\fka_2\fka_3,
$$
 and $V$ is a real-valued function.

\begin{theorem}\label{T:1.101} Let $\uu$ be a smooth solution of \eqref{e1.105} and let
$\fka_i$, $i=1,2,3$, satisfy the above properties. Then, both  $w=|\uu|^2$ and $w=\uu^\dag\fkb\uu$ satisfy
\begin{equation}\label{e1.89}
w_{tt}-w_{xx}-w_{yy}-w_{zz}=0.
\end{equation}
\end{theorem}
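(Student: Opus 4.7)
My plan is to mirror the argument of Theorem~\ref{T:100}, extending the two-current computation from $1+1$ to $3+1$ dimensions. The extra structure that appears is the pseudoscalar $\fkb=i\fka_1\fka_2\fka_3$, together with the off-diagonal Clifford products $\fka_k\fka_l$ for $k\ne l$, which were absent in one space variable.

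First I would derive two continuity equations. Multiplying \eqref{e1.105} on the left by $\uu^\dag$, taking the Hermitian conjugate of the result, and adding, the $B$-contribution cancels because $B^*=B$ and $\fka_k^*=\fka_k$, yielding the charge-conservation identity
\[
(|\uu|^2)_t=\sum_{k=1}^{3}(\uu^\dag\fka_k\uu)_{x_k}.
\]
A short bookkeeping check using $\fka_i\fka_j=-\fka_j\fka_i$ and $\fka_i^*=\fka_i$ shows $\fkb^*=\fkb$, $\fkb^2=I$, $\fkb\fka_k=\fka_k\fkb$ for every $k$, and $\fkb B=B\fkb$; the same procedure carried out with $\uu^\dag\fkb$ in place of $\uu^\dag$ delivers the axial continuity equation
\[
(\uu^\dag\fkb\uu)_t=\sum_{k=1}^{3}(\uu^\dag\fkb\fka_k\uu)_{x_k}.
\]

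Next I would derive companion identities playing the role of $(\uu^\dag\fka\uu)_t=(|\uu|^2)_x$ in Theorem~\ref{T:100}. Multiplying \eqref{e1.105} on the left by $\uu^\dag\fka_k$, adding the Hermitian conjugate, and separating the $l=k$ diagonal term (which contributes $(|\uu|^2)_{x_k}$ thanks to $\fka_k^2=I$), one obtains
\[
(\uu^\dag\fka_k\uu)_t=(|\uu|^2)_{x_k}+\sum_{l\ne k}\bigl(\uu_{x_l}^\dag\fka_l\fka_k\uu+\uu^\dag\fka_k\fka_l\uu_{x_l}\bigr),
\]
and an analogous identity with $\fkb\fka_k$ replacing $\fka_k$ yields the corresponding companion for $\uu^\dag\fkb\fka_k\uu$. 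Differentiating the first continuity equation in $t$ and substituting this expression for $\partial_t(\uu^\dag\fka_k\uu)$ gives
\[
(|\uu|^2)_{tt}=\Delta|\uu|^2+\sum_{k=1}^{3}\partial_{x_k}\sum_{l\ne k}\bigl(\uu_{x_l}^\dag\fka_l\fka_k\uu+\uu^\dag\fka_k\fka_l\uu_{x_l}\bigr),
\]
so the target $(|\uu|^2)_{tt}-\Delta|\uu|^2=0$ reduces to showing that the right-hand double sum vanishes. The analogous manipulation using $\fkb^2=I$ then reduces $(\uu^\dag\fkb\uu)_{tt}-\Delta(\uu^\dag\fkb\uu)=0$ to a parallel cancellation.

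The main obstacle is precisely this final cancellation. In one space dimension it was trivial because there were no off-diagonal Clifford products. In three dimensions one must pair the contribution of each ordered pair $(k,l)$ with the pair $(l,k)$ in the double divergence, swap the spatial derivatives using the commutativity of mixed partials, and exploit $\fka_k\fka_l=-\fka_l\fka_k$ together with the anti-Hermiticity of $\fka_k\fka_l$ (for $k\ne l$) to match the Hermitian-conjugate swap and cancel each pair. This is the delicate step where the Clifford algebra, the Hermitian structure of the bilinears, and the equality of mixed partials must be made to work together simultaneously; I would carry it out pair by pair.
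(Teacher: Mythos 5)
Your plan follows the paper's strategy, and up to the last step your bookkeeping is actually \emph{more} careful than the paper's: the paper passes from multiplying \eqref{e1.105} by $\uu^\dag\fka_1$ directly to $(\uu^\dag\fka_1\uu)_t-(|\uu|^2)_x=0$ (its equations \eqref{e1.107}--\eqref{e1.109}) as if the off-diagonal contributions $\uu^\dag\fka_1\fka_2\uu_y+\uu_y^\dag\fka_2\fka_1\uu$ and $\uu^\dag\fka_1\fka_3\uu_z+\uu_z^\dag\fka_3\fka_1\uu$ were absent, whereas you correctly retain them. The genuine gap is the cancellation you defer to a ``pair by pair'' check: it does not go through. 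Writing $M=\fka_l\fka_k$ for $l\ne k$ (anti-Hermitian, since $M^\dag=\fka_k\fka_l=-M$), your residual for the ordered pair $(k,l)$ equals $2\Re(\uu_{x_l}^\dag M\uu)$ and that for $(l,k)$ equals $-2\Re(\uu_{x_k}^\dag M\uu)$. Applying $\po_{x_k}$ and $\po_{x_l}$ respectively and adding, the mixed second-derivative terms cancel by equality of mixed partials, but the remaining first-order terms \emph{reinforce}: since $\Re(\uu_{x_k}^\dag M\uu_{x_l})=-\Re(\uu_{x_l}^\dag M\uu_{x_k})$ by anti-Hermiticity of $M$, each unordered pair $\{k,l\}$ contributes
\[
2\Re\bigl(\uu_{x_l}^\dag M\uu_{x_k}\bigr)-2\Re\bigl(\uu_{x_k}^\dag M\uu_{x_l}\bigr)=4\Re\bigl(\uu_{x_l}^\dag\fka_l\fka_k\uu_{x_k}\bigr),
\]
which is not zero in general. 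The very ingredients you invoke (antisymmetry of $\fka_k\fka_l$, symmetry of mixed partials, the Hermitian-conjugate swap) are exactly what prevent the cancellation.

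This is not a repairable defect of the method: the conclusion of Theorem~\ref{T:1.101} fails for explicit solutions. Take $B=0$ and the standard representation $\fka_j=\left(\begin{matrix}0&\sigma_j\\ \sigma_j&0\end{matrix}\right)$, and choose $\phi,\phi'\in\C^4$ with $\fka_1\phi=-\phi$, $\fka_2\phi'=-\phi'$ and $\phi'^\dag\phi\ne0$ (e.g.\ $\phi=(1,0,0,-1)^\top$, $\phi'=(1,0,0,-i)^\top$, for which $\phi'^\dag\phi=1-i$). Then $\uu=e^{i(x-t)}\phi+e^{i(y-t)}\phi'$ is a smooth solution of \eqref{e1.105}, yet $|\uu|^2=|\phi|^2+|\phi'|^2+2\Re\bigl(e^{i(x-y)}\phi'^\dag\phi\bigr)$ is independent of $t$ and not harmonic, so $w_{tt}-w_{xx}-w_{yy}-w_{zz}=4\Re\bigl(e^{i(x-y)}(1-i)\bigr)\ne0$. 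In other words, the $1+1$-dimensional argument of Theorem~\ref{T:100} genuinely does not extend to $3+1$ dimensions because of the off-diagonal Clifford products, and the step you flagged as delicate is precisely where both your proposal and the paper's own proof break down.
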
 

\begin{proof} Multiplying \eqref{e1.105} by $\uu^\dag $ to the left
\begin{equation*}
\uu^\dag \uu_t-\uu^\dag \fka_1 \uu_x-\uu^\dag \fka_2 \uu_y-\uu^\dag\fka_3 \uu_z=-i\uu^\dag B(t,x,y,z)\uu,
\end{equation*}
 applying ${}^\dag$ to the last equation
 $$
   \uu_t^\dag \uu - \uu_x^\dag\fka_1 \uu- \uu_y^\dag\fka_2 \uu - \uu_z^\dag\fka_3 \uu=i\uu^\dag B(t,x,y,z) \uu,
 $$
 adding the last two gives
 $$
 (|\uu|^2)_t-(\uu^\dag\fka_1\uu)_x-(\uu^\dag\fka_2 \uu)_y-(\uu^\dag\fka_3 \uu)_z=0.
 $$
Deriving the last equation by $t$, it follows
\begin{equation}\label{e1.106}
(|\uu|^2)_{tt}-(\uu^\dag\fka_1 \uu)_{tx}-(\uu^\dag\fka_2 \uu)_{ty}-(\uu^\dag\fka_3 \uu)_{tz}=0.
\end{equation}
Similarly, multiplying \eqref{e1.105}  by $u^\dag \fka_1$,  it follows,
\begin{equation*}
\uu^\dag\fka_1 \uu_t-\uu^\dag \uu_x-\uu^\dag \fka_1\fka_2 \uu_y-\uu^\dag\fka_1\fka_3 \uu_z=
-iu^\dag B(t,x,y,z)\fka_1 u,
\end{equation*}  
applying ${}^\dag$ to the last equation
\begin{equation*}
 \uu_t^\dag \fka_1 \uu-\uu_x^\dag \uu- \uu_y^\dag\fka_2\fka_1  \uu-\uu_z^\dag\fka_3\fka_1  \uu=i\uu^\dag B(t,x,y,z)\fka_1\uu,
\end{equation*}  
adding the last two gives
\begin{equation}\label{e1.107}
(\uu^\dag \fka_1 \uu)_t-(|\uu|^2)_x=0.
\end{equation}
 Similarly, we get
 \begin{equation}\label{e1.108}
(\uu^\dag \fka_2 \uu)_t-(|\uu|^2)_y=0,
 \end{equation}
and
\begin{equation}\label{e1.109}
(\uu^\dag \fka_3 \uu)_t-(|\uu|^2)_z=0.
\end{equation}
Deriving \eqref{e1.107} by $x$, \eqref{e1.108} by $y$ and \eqref{e1.109} by $z$ there follow, respectively, 
\begin{equation}\label{e1.110}
(\uu^\dag \fka_1 \uu)_{xt}-(|\uu|^2)_{xx}=0,
\end{equation}
\begin{equation}\label{e1.111}
(\uu^\dag \fka_2 \uu)_{yt}-(|\uu|^2)_{yy}=0,
\end{equation}
\begin{equation}\label{e1.112}
(\uu^\dag \fka_3 \uu)_{zt}-(|\uu|^2)_{zz}=0.
\end{equation}
Adding \eqref{e1.106}, \eqref{e1.110}, \eqref{e1.111} and \eqref{e1.112}, it follows
\begin{equation}\label{e1.113}
( |\uu|^2)_{tt}-(|\uu|^2)_{xx}-(|\uu|^2)_{yy}-(|\uu|^2)_{zz}=0,
\end{equation}
which proves the assertion for $w=|\uu|^2$. To prove the assertion for $w=\uu^\dag\fkb\uu$, we first multiply \eqref{e1.105} by $\uu^\dag\fkb$ on the left to obtain
\begin{equation}\label{e1.114}
\uu^\dag\fkb\uu_t+\uu^\dag\fkb\fka_1\uu_x+\uu^\dag\fkb\fka_2\uu_y+\uu^\dag\fkb\fka_3\uu_z=-i\uu^\dag B\fkb \uu.
\end{equation}
 We then apply ${}^\dag$ to \eqref{e1.114} and add the resulting equation to \eqref{e1.114} to obtain
\begin{equation}\label{e1.115}
(\uu^\dag\fkb\uu)_t+(\uu^\dag\fkb\fka_1\uu)_x+(\uu^\dag\fkb\fka_2\uu)_y+(\uu^\dag\fkb\fka_3\uu)_z=0.
\end{equation}
Deriving \eqref{e1.115} by $t$ we obtain
\begin{equation}\label{e1.115'}
(\uu^\dag\fkb\uu)_{tt}+(\uu^\dag\fkb\fka_1\uu)_{tx}+(\uu^\dag\fkb\fka_2\uu)_{ty}+(\uu^\dag\fkb\fka_3\uu)_{tz}=0.
\end{equation}
Now we multiply \eqref{e1.105} by $\uu^\dag\fkb\fka_1$ to get
\begin{equation}\label{e1.116}
\uu^\dag\fkb\fka_1\uu_t+\uu^\dag\fkb\uu_x+\uu^\dag\fkb\fka_1\fka_2\uu_y+\uu^\dag\fkb\fka_1\fka_3\uu_z=-i\uu^\dag B\fkb\fka_1 \uu.
\end{equation}
We then apply ${}^\dag$ to \eqref{e1.116} and add the resulting equation to \eqref{e1.116}
to obtain
 \begin{equation*}
(\uu^\dag\fkb\fka_1\uu)_t+(\uu^\dag\fkb\uu)_x=0,
\end{equation*} 
which deriving with respect to $x$ gives
  \begin{equation}\label{e1.117}
(\uu^\dag\fkb\fka_1\uu)_{tx}+(\uu^\dag\fkb\uu)_{xx}=0.
\end{equation} 
Similarly, we obtain 
   \begin{equation}\label{e1.118}
(\uu^\dag\fkb\fka_2\uu)_{ty}+(\uu^\dag\fkb\uu)_{yy}=0,
\end{equation} 
and
  \begin{equation}\label{e1.119}
(\uu^\dag\fkb\fka_3\uu)_{tz}+(\uu^\dag\fkb\uu)_{zz}=0.
\end{equation} 
Adding \eqref{e1.115'}, \eqref{e1.117}, \eqref{e1.118} and \eqref{e1.119}  we then obtain \eqref{e1.89} for $w=\uu^\dag\fkb\uu$, which concludes the proof.

\end{proof}

\section{Application to Relativistic Scalar Conservation Laws}\label{S:3}
In this section we consider the interaction between short waves governed by a nonlinear massless Dirac equation and long waves governed by a scalar conservation law in the relativistic context such as the one proposed in \cite{LFMO}
(see also \cite{HW}). We consider  the following system  describing this interaction
\begin{align}
&{\frak u}_t= \fka\, {\frak u}_x-i(\l U+\a g(v))\frak u, \label{e1.1}\\
& v_t+\po_x f(t,v)+h(t,v)= \a (g'(v)|\uu|^2)_x,\label{e1.3}
\end{align}
where  $\uu=\left(\begin{matrix}u_1\\ u_2\end{matrix}\right)\in \C^2$, $\fka$ is a $2\X2$ complex matrix satisfying 
$\fka^\dag:=\bar\fka^T=\fka$, $\fka^2=I$, $\l\in\R$, $\a>0$ are constansts and $U$ is given in \eqref{e1.100}.  We assume that
$f,h\in C^2([0,\infty)\X\R)$, with $|f_v(t,v)|\le |v|$,  and $h(t,\pm c_0)=0$, $\pm h_v(t,\pm c_0)<0$,
respectively,  where $c_0$ is the speed of light, for all $t\ge0$.  

In \cite{HW} one has
$$
f(t,v)=\frac1{2a} v^2,\quad h(t,v)=\frac{\dot a}{a}v\left(1-\frac{v^2}{c_0^2}\right),
$$
where  $a\in C^2([0,\infty))$, $a(t)\ge 1$, $\dot a(t)>0$, for all $t\ge0$, $0<\d_1\le \dfrac{\dot a}{a}\le L_0$, and $c_0$ is the speed of light.

We prescribe   initial conditions
\begin{equation} \label{e1.4}
\uu(0,x)=\uu_0(x),\quad v(0,x)=v_0(x), \quad x\in\R,
\end{equation}
and we assume that
\begin{equation}\label{e1.4''}
\uu_0\in H^1(\R),\quad v_0\in L^\infty(\R)\cap L^1(\R),
\end{equation}
with
\begin{equation}\label{e1.5}
\|v_0\|_\infty< c_0.
\end{equation}

As to the function $g$ we assume that $g\in C^3(\R)$ and $\supp g'\subset (-M,M)$, with  $0<M<c_0$, satisfying
\begin{equation}\label{e1.4'}
|\{v\in\R\,:\, f_{vv}(t,v)-k g'''(v)=0\}|=0, \quad \forall\, k,t\ge0,
\end{equation}
where $|\{\cdots\}|$ denotes the one-dimensional Lebesgue measure of the set  $\{\cdots\}$.

Observe that from \eqref{e1.1000} and \eqref{e1.4''} it follows that 
$|\uu|^2\in H^1((0,T)\X\R)$, for all $T>0$.

\begin{definition}\label{D:1.1}  For all $T>0$, we say that 
$$
(\uu,v)\in L^2((0,T)\X\R;\C^2)\X  (L^1\cap L^\infty)((0,T)\X\R)
$$ 
is a weak solution of the problem \eqref{e1.1}--\eqref{e1.4} in $(0,T)\X\R$ if for all $\varphi\in 
C_c^\infty((-\infty,T)\X\R)$
the following holds
\begin{equation}\label{eD1.1} 
\begin{aligned}  
&\int_0^T\int_{\R} \frak u \varphi_t -\fka{\frak u}\varphi_x-i(\l U+\a g(v))\uu\varphi\,dx\,dt
+\int_\R \frak u_0 \varphi(0)\,dx=0,\\
&\int_0^T\int_{\R} v\varphi_t+(f(t,v)-\a g'(v)|\uu|^2)\varphi_x+h(t,v)\varphi\,dx\,dt + \int_{\R}v_0\varphi(0)\,dx=0.
\end{aligned}
\end{equation}
Moreover, for any convex 
$\eta\in C^2(\R)$,  we must have
\begin{multline}\label{e1.5'}
\eta(v)_t + \left(\int_0^v f_v(t,\xi) \eta'(\xi)\,d\xi -\a |\uu|^2\int_0^v \eta'(\xi)g''(\xi)\,d\xi   \right)_x+
 h(t,v)\eta'(v) \\ 
 \le \a  \left(\eta'(v)g'(v)-\int_0^v\eta'(\xi) g''(\xi)\,d\xi\right)(|\uu|^2)_x,
\end{multline}
in the sense of distributions in $(0,T)\X\R$.  
\end{definition}

We next state our existence result for the initial value problem \eqref{e1.1}--\eqref{e1.4}. 

\begin{theorem}\label{T:1.1} For all  $T>0$,  there exists a weak solution of the initial value problem \eqref{e1.1}-\eqref{e1.4} in $(0,T)\X\R$. 
Furthermore, if $(\uu^n, v^n)$ is a sequence 
of such weak solutions of the system \eqref{e1.1}--\eqref{e1.4}  with initial data 
$(\uu_0^n,v_0^n)$  uniformly bounded in $H^1(\R)\X (L^1\cap L^\infty)(\R)$,
converging in the sense of distributions to $(\uu_0,v_0)\in H^1(\R)\X (L^1\cap L^\infty)(\R)$
 then, by passing to a subsequence if necessary, $(\uu^n,v^n)$ converges in the sense of distributions  to a weak solution  of  \eqref{e1.1}--\eqref{e1.4}. 
\end{theorem}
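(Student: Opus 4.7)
The strategy rests on the decoupling provided by Theorem~\ref{T:100}. In equation \eqref{e1.1} the matrix $A(t,x):=\l U(t,x)+\a g(v(t,x))\,I$ is self-adjoint and commutes with $\fka$ (writing $U=|\uu|^2 I-(\uu^\dag\fka\uu)\fka$ this is immediate, and the scalar term commutes trivially). Theorem~\ref{T:100} then shows that both $|\uu|^2$ and $\uu^\dag\fka\uu$ solve the $1$-D wave equation and are explicitly determined by $\uu_0$ alone through \eqref{e1.1000}--\eqref{e1.1000'}, \emph{independently} of $v$. Since $\uu_0\in H^1(\R)$, these quantities, and hence $U(t,x)$, lie in $H^1((0,T)\X\R)$. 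This reduces the existence problem to two decoupled single-equation problems, solved in turn.

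The next step is to solve the scalar conservation law \eqref{e1.3}, regarding $|\uu|^2$ as known data. I would use vanishing viscosity, adding $\eps\,v^\eps_{xx}$ to the right-hand side and solving with initial data $v_0$. The bound $\|v^\eps\|_\infty\le c_0$ follows from a maximum principle argument: the source $\a(g'(v^\eps)|\uu|^2)_x$ vanishes for $|v^\eps|\ge M$ because $\supp g'\subset(-M,M)$ with $M<c_0$, while the dissipativity $h(t,\pm c_0)=0$, $\pm h_v(t,\pm c_0)<0$ repels $v^\eps$ from the endpoints. Uniform $L^1$ bounds follow by standard arguments using the structure of $f$ and $h$. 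Strong $L^1_\loc$ compactness of $\{v^\eps\}$ is then produced by Panov's theorem: for each fixed $(t,x)$ the effective flux $f(t,v)-\a g'(v)|\uu(t,x)|^2$ has nonzero second $v$-derivative off a set of measure zero, which is precisely \eqref{e1.4'} applied with $k=\a|\uu(t,x)|^2$. Multiplying the viscous equation by $\eta'(v^\eps)$ and discarding the nonnegative viscous dissipation delivers the entropy inequality \eqref{e1.5'} in the limit.

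With $v$ in hand, equation \eqref{e1.1} becomes a \emph{linear} Dirac equation $\uu_t-\fka\uu_x=-iB(t,x)\uu$ with $B(t,x)=\l U(t,x)+\a g(v(t,x))I$ bounded, self-adjoint, and commuting with $\fka$. The operator $\fka\po_x$ is skew-adjoint on $L^2(\R;\C^2)$ with domain $H^1$, so it generates a unitary group $S(t)=\exp(t\fka\po_x)$, and the Duhamel equation
\begin{equation*}
\uu(t)=S(t)\uu_0-i\int_0^t S(t-s)\,B(s)\uu(s)\,ds
\end{equation*}
is solved by Picard iteration in $C([0,T];L^2)$. One must then verify that the $|\uu|^2$ and $\uu^\dag\fka\uu$ produced by this linear solve agree with the functions used in the first step; this follows from Theorem~\ref{T:100}, because the two candidate pairs both satisfy the $1$-D wave equation with identical initial values and identical initial time-derivatives (computed via \eqref{e1.100'} and \eqref{e1.102'}), hence coincide by uniqueness.

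For the stability claim, let $(\uu^n,v^n)$ be weak solutions with data uniformly bounded in $H^1\X(L^1\cap L^\infty)$ and converging distributionally. The decoupling again makes the $\uu$-side automatic: $|\uu^n|^2$ and $(\uu^n)^\dag\fka\uu^n$ are given by \eqref{e1.1000}--\eqref{e1.1000'} applied to $\uu^n_0$, so they converge strongly in $L^2_\loc$, while $\uu^n$ converges weakly in $L^2$ along a subsequence. For $v^n$, the uniform $L^\infty$ bound together with \eqref{e1.5'} and the non-degeneracy \eqref{e1.4'} yield strong $L^1_\loc$ compactness by Panov's theorem. The passage to the limit in \eqref{eD1.1} and \eqref{e1.5'} is then routine, since every nonlinear product involved (e.g.\ $g(v^n)\uu^n$ and $g'(v^n)|\uu^n|^2$) pairs strong convergence of one factor with $L^2$ strong or weak convergence of the other. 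The main technical obstacle throughout is the compensated-compactness/Panov argument for $v^n$, which rests squarely on the non-degeneracy condition \eqref{e1.4'}.
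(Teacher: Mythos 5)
Your proposal is correct and follows essentially the same route as the paper: decoupling via Theorem~\ref{T:100}, vanishing viscosity plus a genuine-nonlinearity compactness argument under \eqref{e1.4'} for the scalar law, a Duhamel/unitary-group fixed point for the Dirac equation, and the same entropy-inequality machinery for the stability claim. The only cosmetic difference is that you invoke Panov's theorem where the paper uses Tartar's compensated compactness (with Murat's lemma for the $W^{-1,2}_{\text{\rm loc}}$ compactness in the stability part), and you add a worthwhile remark, implicit in the paper, that the $|\uu|^2$ produced by the linear solve is consistent with the one used to build $v$.
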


\begin{proof}  By \eqref{e1.1000} we see that \eqref{e1.3}  essentially decouples from \eqref{e1.1}.
Therefore we can first solve \eqref{e1.3} and then plug the solution of \eqref{e1.3} into \eqref{e1.1}.
Further, since $U$ is also determined by the $\uu_0$, by Theorem~\ref{T:100}, we see that once we have the solution of \eqref{e1.3}, the solution of \eqref{e1.1} is immediate.  
We can use the vanishing viscosity method to solve \eqref{e1.3}. More specifically,  we approximate the solution of \eqref{e1.3} by solving the problem
\begin{align}
& v_t+\po_x f(t,v)+ h^\ve (t,v)=(g'(v)|\uu|^2)_x+\ve v_{xx}, \label{e1.41}\\
&v(0,x)=v_0^\ve(x), \label{e1.42}
\end{align}
where $h^\ve(t,v)=h(t,(1-\ve)v)$, $v_0^\ve= v_0*\rho_\ve$, where $\rho_\ve$ is a standard mollifying kernel,  for $0<\ve<<1$ such that $\pm h(t,\pm (1-\ve)c_0) >0$, respectively, which is possible by the hypotheses on $h$. Also, because of  the assumption on
$\supp g'$,  we can apply a standard maximum principle argument to deduce that the solution $v^\ve$ of \eqref{e1.41}-\eqref{e1.42} satisfies
\begin{equation}\label{e1.44}  
|v^\ve(t,x)|\le c_0,\quad \text{for all $(t,x)\in(0,\infty)\X\R$}.
\end{equation}
Using this a priori estimate, the solution of \eqref{e1.41}-\eqref{e1.42} follows easily by a  standard fixed point argument as explained in several text books, e.g., in \cite{Ho}, chapter~3 (see also \cite{Da}). 

Given $\eta\in C^2(\R)$ convex, multiplying \eqref{e1.41} by $\eta'$ and making trivial rearrangements we obtain
\begin{multline}\label{e1.5''}
\eta(v^\ve)_t + \left(\int_0^{v^\ve(t,x)} f_v(t,\xi) \eta'(\xi)\,d\xi -\a |\uu|^2\int_0^{v^\ve(t,x)} \eta'(\xi)g''(\xi)\,d\xi   \right)_x+
 h^\ve(t,v^\ve)\eta'(v^\ve) \\ 
 =\ve (\eta(v^\ve))_{xx}-\ve\eta''(v^\ve)|v_x^\ve|^2+ \a  \left(\eta'(v^\ve)g'(v^\ve)-\int_0^{v^\ve(t,x)}\eta'(\xi) g''(\xi)\,d\xi\right)(|\uu|^2)_x,
\end{multline}
Taking a strictly convex $\eta$, for instance, $\eta(v)=\frac12 v^2$, using the uniform boundedness \eqref{e1.41}  of $v^\ve$  and the fact that $(|\uu|^2)_x\in L^2((0,T)\X\R)$, for any $T>0$,  integrating \eqref{e1.5''} on  $(0,t)\X \R $,  we obtain 
\begin{multline}\label{e1.42'}
\int_\R (v^\ve(t))^2\,dx +\int_{(0,t)\X\R}\ve |v_x^\ve|^2\,dx\,dt\\ 
\le  \int_{\R}(v_0^\ve)^2\,dx+\int_{(0,t)\X\R}(|\uu|^2)_x^2\, dx\,dt
+ C\int_{(0,t)\X\R}|v^\ve|^2\,dx\,dt,
\end{multline}
which, by using Gronwall's inequality, gives the uniform boundedness of $v^\ve$ in $L^\infty((0,T); L^2(\R))$, for all $T>0$, and also  
\begin{equation}\label{e1.5'''}
 \int_{(0,T)\X\R}\ve |v_x^\ve|^2\,dx\,dt\le C(T),
 \end{equation}
 with $C(T)>0$ independent of $\ve$.  Let us denote
$$
q_\eta(t,v;|\uu|^2):=\int_0^{v^\ve(t,x)} f_v(t,\xi) \eta'(\xi)\,d\xi -\a |\uu|^2\int_0^{v^\ve(t,x)} \eta'(\xi)g''(\xi)\,d\xi.
$$  
{}From \eqref{e1.5'''}, it follows in a by now standard way that, for all $\eta\in C^2(\R)$,  
$$
\eta(v^\ve)_t+\po_x q_\eta(t,v;|\uu|^2)\in \text{compact in $W_\loc^{-1,2}((0,T)\X\R)$}.
$$  
Applying Tartar's compensated compactness argument in \cite{Ta}, using the non-degeneracy condition \eqref{e1.4'}, we obtain the convergence in $L_\loc^1((0,\infty)\X\R)$ of a subsequence of $v^\ve$,
also denoted $v^\ve$, to a function $v\in (L^1\cap L^\infty)((0,T)\X\R)$, for all $T>0$. The latter then clearly satisfies the second integral equation in \eqref{eD1.1} and \eqref{e1.5'}.   

We then use the obtained limit function $v$ in \eqref{e1.1} and also the fact that 
$U$ is determined by the initial data $\uu_0$, by Theorem~\ref{T:100}. We find a weak solution $\uu$ of \eqref{e1.1}, using Duhamel's principle,  by solving the integral equation 
\begin{equation}\label{e1.43'}
\uu(t)=S(t-t_0)\uu(t_0)-\int_{t_0}^t S(t-s)i(\l U(s)+\a g(v(s)))\uu(s)\,ds,
\end{equation}
where $S(t)$ is the unitary group generated by $\fka \frac{\po}{\po x}$, which is a skew-adjoint operator with domain $H^1(\R)$,  that is, $S(t)=\operatorname{exp}\left(\fka \frac{\po}{\po x}\right)t$. Using the fact that $(\l U(s)+\a g(v(s)))\in L^\infty((0,\infty)\X\R)$, we easily obtain a solution of \eqref{e1.43'} in $C([0,T_0];L^2(\R))$, with $t_0=0$,  by a fixed point  argument,  for $T_0>0$ small enough, whose smallness depends only on $\|(\l U+\a g(v))\|_\infty$. We then extend the solution using the same argument, for $t_0=T_0, 2T_0, 3T_0,\cdots$. By the semigroup property we then obtain a solution $\uu\in C([0,\infty); L^2(\R))$ to
\begin{equation}\label{e1.44'}
\uu(t)=S(t)\uu_0-\int_0^t S(t-s)i(\l U(s)+\a g(v(s)))\uu(s)\,ds,
\end{equation}
for all $t>0$. It is then standard to check that this solution of \eqref{e1.44'}  satisfies the first integral equation in \eqref{eD1.1}.

The second part of the statement follows by noticing that, under the assumptions in the statement, it follows that $(|\uu^n|^2)$, passing to a subsequence if necessary,  converges in $L^2((0,T)\X\R)$,   $v^n$ is bounded in $(L^1\cap L^\infty)((0,T)\X\R)$,
and the fact that the inequalities obtained from  \eqref{e1.5'} applied to $(\uu^n,v^n)$
imply, in a by now standard way, using the compactness of the embedding $\M_\loc((0,T)\X\R)\Subset W_\loc^{-1,p}((0,T)\X\R)$, for some $1<p<2$, where $\M_\loc((0,T)\X\R)$ is
the space of measures of locally finite variation,   and interpolation between 
$W_\loc^{-1,p}((0,T)\X\R)$ and $W^{-1,\infty}((0,T)\X\R)$ (see, e.g., \cite{Mu}),    that   
 $$
\eta(v^n)_t+\po_x q_\eta(t,v^n;|\uu^n|^2)\in \text{compact in $W_\loc^{-1,2}((0,T)\X\R)$}.
$$  
Therefore, we can apply again Tartar's compensated compactness arguments in \cite{Ta},
and the final assertion follows. 

 \end{proof}

\section{Application to the Augmented Born-Infeld Equations.}\label{S:4}

In this section we consider the interaction between short waves governed by a massless nonlinear Dirac equation  with long waves governed by the augmented Born-Infeld (ABI) equations, an extension of the  Born-Infeld equations introduced by Brenier in \cite{Br}, the latter 
being a nonlinear version of the Maxwell equations of the electromagnetism. 

The Born-Infeld equations ({\em cf.} \cite{Br}, see also, e.g., \cite{NS}) 
are obtained from the energy density $h$ given by
$$
h=\sqrt{1+B^2+D^2+|B\X D|^2}
$$
where $|\cdot|$ denotes the Euclidean norm, $B$ and $D$ are fields in $\R^3$ related with the magnetic
and electric  fields, $H$ and $E$, respectively, by the expressions
$$
E=\frac{\po h}{\po D}=\frac{D+B\X P}{h}, \quad H=\frac{\po h}{\po B}=\frac{B-D\X P}{h},
$$
where 
$$
P=D\X B
$$
is the Poynting vector.  The BI equations are 
\begin{equation}\label{e2.1}
 \begin{aligned}
 &\po_t D+\nabla\X \left(\frac{-B+ D\X P}{h}\right)=\po_t B+\nabla\X\left(\frac{D+ B\X P}{h}\right)=0 ,\\
 &\qquad\qquad \nabla \cdot D=\nabla \cdot B=0, 
 \end{aligned}
 \end{equation}
 and the energy density satisfies the additional conservation law
 \begin{equation}\label{e2.2}
 \po_t h+\nabla\cdot P=0.
 \end{equation}
 As remarked in \cite{Br},  $h$ is a strictly convex function of $B$ and $D$ only in a neighborhood 
 of the origin, not in the large, and it is not  clear that the BI equations  are hyperbolic in the large.
 Nevertheless, clearly $h$ is a global convex function of $B$, $D$ and $P=B\X D$. Motivated by this observation,  the following new evolution equation is obtained for $P$ in \cite{Br},
 \begin{equation}\label{e2.3}
 \po_t P+\nabla\cdot \left(\frac{P\otimes P-B\otimes B- D\otimes D}{h}\right) =\nabla\left(\frac1h \right).
 \end{equation}
 The 10$\X$10 system formed by the equations in \eqref{e2.1}, \eqref{e2.2} and \eqref{e2.3} is the so called augmented Born-Infeld (ABI) system. The hyperbolicity of the ABI system is proven in \cite{Br},
 where it is shown that 
 \begin{equation}\label{e2.4}
 S(D,B,P,h)=\frac{1+B^2+D^2+P^2}{2h}, \quad h>0,
 \end{equation}
 is convex entropy for the ABI system. More specifically, smooth solutions of the ABI system also satisfy the additional conservation law
 \begin{equation}\label{e2.5}
 \po_t S+ \nabla\cdot \frac{SP}{h}=\nabla\cdot \left\{\frac{P=D\X B+(B\cdot P)B+ (D\cdot P)D}{h^2}\right\}.
 \end{equation}
 Here, we are concerned with the plane waves of the  ABI system, that is, solutions that, with respect to the space variable $x=(x_1,x_2, x_3)$,  do not depend on $x_2, x_3$. Therefore,  it follows for these solutions that
 $$
 \po_t B_1=\po_t D_1=0,\quad \po_1 B_1=\po_1 D_1=0,
 $$
 which immediately follows from \eqref{e2.1}, which implies that $B_1$ and $D_1$ are constant.
 Let us define the positive constant $Z$ such that $Z^2=1+B_1^2+D_1^2$. The $8\X 8$ ABI system is as  follows:
 \begin{align}
 \po_t h+\po_1 P_1&=0, \label{e2.6}\\
 \po_t P_1+ \po_1\left(\frac{P_1^2-Z^2}{h}\right)&=0,\label{e2.7}\\
 \po_t D_2+\po_1\left(\frac{B_3+D_2P_1-D_1P_2}{h}\right)&=0,\label{e2.8}\\
 \po_t D_3+\po_1\left(\frac{-B_2+D_3P_1-D_1P_2}{h}\right)&=0,\label{e2.9}\\
 \po_t B_2+\po_1\left(\frac{-D_3+B_2P_1-D_1P_3}{h}\right)&=0,\label{e2.10}\\
 \po_t B_3+\po_1\left(\frac{D_2+B_3P_1-B_1P_3}{h}\right)&=0,\label{e2.11}\\
 \po_t P_2+\po_1\left(\frac{P_1P_2-D_1D_2-B_1B_2}{h}\right)&=0,\label{e2.12}\\
 \po_t P_3+\po_1\left(\frac{P_1P_3-D_1D_3-B_1B_3}{h}\right)&=0.\label{e2.13}
 \end{align}
We first observe that  \eqref{e2.6} and \eqref{e2.7} form a $2\X 2$ system decoupled from the remaining 6 equations of the ABI system. In fact, \eqref{e2.6}-\eqref{e2.7} describes the evolution of an isentropic gas  often called  Chaplygin  gas.  It is a linearly degenerate system, which is also the case of the whole ABI system. Since we want to describe the interaction of the long waves governed by the ABI system with 
short waves governed by a nonlinear Dirac equation, and the latter must be formulated in the Lagrangian coordinates of the long waves, we pass system \eqref{e2.6}--\eqref{e2.13} to Lagrangian coordinates as follows. Let us denote $\tau=\dfrac{1}{h}$, $v=\dfrac{P_1}{h}$, $\tilde D_i=\dfrac{D_i}{h}$,
$\tilde B_i=\dfrac{B_i}{h}$, $\tilde P_i=\dfrac{P_i}{h}$, $i=2,3$. We then get 
\begin{align}
 \po_t \tau-\po_y v&=0, \label{e2.14}\\
 \po_t v- \po_y\left(Z^2\tau \right)&=0,\label{e2.15}\\
 \po_t \tilde D_2+\po_y\left(\tilde B_3- D_1\tilde P_2\right)&=0,\label{e2.16}\\
 \po_t \tilde D_3-\po_y\left(\tilde B_2+ D_1\tilde P_3\right)&=0,\label{e2.17}\\
 \po_t \tilde B_2-\po_y\left(\tilde D_3+ B_1\tilde P_2\right)&=0,\label{e2.18}\\
 \po_t\tilde B_3+\po_y\left(\tilde D_2-  B_1\tilde P_3\right)&=0,\label{e2.19}\\
 \po_t \tilde P_2-\po_y\left( D_1\tilde D_2+ B_1\tilde B_2\right)&=0,\label{e2.20}\\
 \po_t \tilde P_3-\po_y\left( D_1\tilde D_3+ B_1\tilde B_3\right)&=0.\label{e2.21}
 \end{align}
 Hence, recalling that $D_1$ and $B_1$ are constants, we see that the plane waves of the ABI system are described by a linear hyperbolic system with constant coefficients in Lagrangian coordinates.
 Moreover, introducing the Riemann invariant variables $\theta=v+Z\tau$ and $\z=v-Z\tau$ equations \eqref{e2.14} and \eqref{e2.15} may be replaced by
 \begin{align}
 \po_t \theta-Z\po_y\theta&=0,\label{e2.14'}\\
 \po_t\z+Z\po_y \z&=0.\label{e2.15'}
 \end{align}
The physical region is $h\ge 1$, which is equivalent to $\theta-\z\le 2Z$. 

We propose to model the interaction of the electromagnetic waves governed by the ABI equations with short waves governed by a nonlinear Dirac equation by the following system
\begin{align}
& {\frak u}_t=\fka\, {\frak u}_y-i(\l U+\a_1 g_1(\z)+\a_2 g_2(\theta))\uu , \label{e2.22}\\
& \theta_t-Z \theta_y=\a_1 (g_1'(\theta)|\uu|^2)_y,\label{e2.24}\\
&\z_t+Z\z_y=\a_2 (g_2'(\z)|\uu|^2)_y,\label{e2.25}
\end{align}
where $U$ is as in \eqref{e1.00},  $g_1,g_2\in C^3(\R)$ such that for certain $a\le b \le c+2Z\le d+2Z$, $\supp g_1'\subset [a,b]$,
 $\supp g_2'\subset[c,d]$.
  
 Observe that in Lagrangian coordinates, the variables $\tilde D_i, \tilde B_i, \tilde P_i$, $i=2,3$,  are not affected by the interactions
 with the short waves and keep being described by the equations \eqref{e2.16}--\eqref{e2.21}. However, the corresponding variables in Eulerian coordinates, $D_i,B_i,P_i$, $i=2,3$,  are also affected by those interactions.
 More specifically, let us define 
 \begin{align*}
 \g_1(h,P_1)&=\frac{\a_1}{2Z}g_1'(\frac{P_1}h+\frac{Z}h)-\frac{\a_2}{2Z}g_2'(\frac{P_1}h-\frac{Z}h),\\
 \g_2(h,P_1)&=\frac{\a_1}{2}g_1'(\frac{P_1}h+\frac{Z}h)+\frac{\a_2}{2}g_2'(\frac{P_1}h-\frac{Z}h).
 \end{align*}   
 Then, in Eulerian coordinates, the ABI equations through the interactions with the short waves become
 \begin{align*}
 \po_t h+\po_1 P_1&=\po_1(\g_1(h,P_1)h|\uu|^2), \\
 \po_t P_1+ \po_1\left(\frac{P_1^2-Z^2}{h}\right)&=\po_1(\g_1(h,P_1) P_1 |\uu|^2)+\po_1(\g_2(h,P_1)|\uu|^2),\\
 \po_t D_2+\po_1\left(\frac{B_3+D_2P_1-D_1P_2}{h}\right)&=\po_1(\g_1(h,P_1)D_2|\uu|^2),\\
 \po_t D_3+\po_1\left(\frac{-B_2+D_3P_1-D_1P_2}{h}\right)&=\po_1(\g_1(h,P_1)D_3|\uu|^2),\\
 \po_t B_2+\po_1\left(\frac{-D_3+B_2P_1-D_1P_3}{h}\right)&=\po_1(\g_1(h,P_1)B_2|\uu|^2),\\
 \po_t B_3+\po_1\left(\frac{D_2+B_3P_1-B_1P_3}{h}\right)&=\po_1(\g_1(h,P_1)B_3|\uu|^2),\\
 \po_t P_2+\po_1\left(\frac{P_1P_2-D_1D_2-B_1B_2}{h}\right)&=\po_1(\g_1(h,P_1)P_2|\uu|^2),\\
 \po_t P_3+\po_1\left(\frac{P_1P_3-D_1D_3-B_1B_3}{h}\right)&=\po_1(\g_1(h,P_1)P_3
 |\uu|^2).
 \end{align*}
Thus, once we get a solution to \eqref{e2.22}--\eqref{e2.25}, together with a solution to \eqref{e2.16}--\eqref{e2.21}, we get, in particular, also a solution to the above forced ABI system with interaction forces in Eulerian coordinates, by using the inverse Lagrangian transformation, which is nonsingular in the region $h>0$.  Therefore, henceforth we will no longer mention the ABI system in Eulerian coordinates but only concentrate on solving the initial value problem for \eqref{e2.22}--\eqref{e2.25}.  
 
 We then prescribe the initial conditions
 \begin{equation} \label{e2.26}
\uu(0)=\uu_0,\quad \theta(0)=\theta_0,\quad \z(0)=\z_0, 
\end{equation}
with $\uu_0\in H^1(\R)$, $\theta_0,\z_0\in (L^1\cap L^\infty)(\R)$, so that
\begin{equation}\label{e2.27}
a<\theta_0 <b,\quad c<\z_0<d.
\end{equation}
We also assume the following non-degeneracy condition on $g_1,g_2$,
\begin{equation}\label{e2.28}
|\{\theta\in[a,b]\,:\, g_1'''(\theta)=0\}|=0=|\{\z\in[c,d]\,:\, g_2'''(\z)=0\}|.
\end{equation}

\begin{definition}\label{D:2.1}  For all $T>0$, we say that 
$$
(\uu,\theta,\z)\in L^2((0,T)\X\R;\C^2)\X  \left(L^\infty((0,T)\X\R)\right)^2
$$ 
is a weak solution of the problem \eqref{e2.22}--\eqref{e2.26} in $(0,T)\X\R$ if for all $\varphi\in 
C_c^\infty((-\infty,T)\X\R)$
the following holds
\begin{equation}\label{eD2.1} 
\begin{aligned}  
&\int_0^T\int_{\R} \frak u \varphi_t -\fka{\frak u}\varphi_x-i(\l U+\a_1 g_1(\theta)+\a_2g_2(\z) )\uu\varphi\,dx\,dt
+\int_\R \frak u_0 \varphi(0)\,dx=0,\\
&\int_0^T\int_{\R} \theta\varphi_t+(Z\theta-\a_1 g_1'(\theta)|\uu|^2)\varphi_x \,dx\,dt +
 \int_{\R}\theta_0\varphi(0)\,dx=0,\\
 &\int_0^T\int_{\R} \z\varphi_t+(Z\z-\a_2 g_2'(\z)|\uu|^2)\varphi_x \,dx\,dt +
 \int_{\R}\theta_0\varphi(0)\,dx=0.
\end{aligned}
\end{equation} 
 Moreover, for any convex $\eta\in C^2(\R)$,  we have
\begin{equation}\label{e2.entr}
\begin{aligned}
&\left(\eta(\theta)_t - \left(Z\eta(\theta) +\a_1 |\uu|^2\int_0^\theta \eta'(\xi)g_1''(\xi)\,d\xi   \right)_x \right)\Big\lfloor |\uu|^2 \\
&\qquad\qquad\le \left(\a_1  \left(\eta'(\theta)g_1'(\theta)-\int_0^\theta\eta'(\xi) g_1''(\xi)\,d\xi\right)(|\uu|^2)_x\right)\Big\lfloor |\uu|^2,\\
&\left(\eta(\zeta)_t + \left(Z\eta(\zeta) -\a_2 |\uu|^2\int_0^\zeta \eta'(\xi)g_2''(\xi)\,d\xi   \right)_x \right)\Big\lfloor |\uu|^2\\ 
&\qquad\qquad \le \left(\a_2  \left(\eta'(\zeta)g_2'(\zeta)-\int_0^\zeta\eta'(\xi) g_2''(\xi)\,d\xi\right)(|\uu|^2)_x\right)\Big\lfloor |\uu|^2,
\end{aligned}
\end{equation}
in the sense of the distributions where, for $\ell\in W_\loc^{-1,2}((0,T)\X\R)$,   $\ell\big\lfloor |\uu|^2\in  {\mathcal D}'((0,T)\X\R)$ is defined by $\la \ell\big\lfloor |\uu|^2,\varphi\ra=\la \ell, |\uu|^2\varphi\ra$,  for $\varphi\in C_c^\infty((0,T)\X\R)$.  
\end{definition} 

\begin{remark}\label{R:2.1} We remark that the restricted form of the entropy inequalities in \eqref{e2.entr} is due to the fact that the system \eqref{e2.22}--\eqref{e2.25} becomes linear where $|\uu|^2=0$. On the other hand, if $|\uu|^2>0$, the non-degeneracy condition \eqref{e2.28} ensures the nonlinear stability of the system, as we will see below. We also observe that, by the Remark~\ref{R:1.1}, if $\uu_0$ satisfies \eqref{e1.IC} then $|\uu|^2>0$ everywhere and the apparent restriction in  \eqref{e2.entr} is imaterial. 
\end{remark}

We then have the following theorem concerning the existence of a weak solution to the problem \eqref{e2.22}--\eqref{e2.26}.
 
\begin{theorem}\label{T:3.1} 
Given $T>0$,  for all $\a_1,\a_2>0$,  there exists a weak solution of the initial value problem \eqref{e2.22}-\eqref{e2.26}, $(\uu,\theta,\z)\in L^2([0,T]\X\bbT)\X  \left(L^\infty([0,T]\X\R)\right)^2$. 
Furthermore, if $(\uu^n,\theta^n, \z^n)$ is a sequence 
of such weak solutions of \eqref{e2.22}--\eqref{e2.26} with initial data 
$(\uu_0^n,\theta_0^n,\z_0^n)$  uniformly bounded in $H^1(\R)\X \left((L^1\cap L^\infty)(\R)\right)^2$, converging in the sense of distributions   to $(\uu_0,\theta_0, \z_0)\in H^1(\R)\X \left((L^1\cap L^\infty)(\R)\right)^2$
 then, by passing to a subsequence if necessary, $(\uu^n,\theta^n,\z^n)$ converges in the sense of distributions to a weak solution  of  \eqref{e2.22}--\eqref{e2.26}. 
\end{theorem}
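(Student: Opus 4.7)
The linchpin is Theorem~\ref{T:100}: since $|\uu|^2$ and $\uu^\dag\fka\uu$ solve the one-dimensional wave equation regardless of the potential appearing in \eqref{e2.22}, formulas \eqref{e1.1000} and \eqref{e1.1000'} determine both functions on $(0,T)\X\R$ explicitly in terms of $\uu_0$. Hence $U$ and $|\uu|^2$ are \emph{a priori} known, of $H^1_{\loc}$ regularity, and the coupled system \eqref{e2.22}--\eqref{e2.25} splits: first solve the two independent scalar transport--reaction equations \eqref{e2.24} and \eqref{e2.25} with $|\uu|^2$ treated as a given forcing, then reconstruct $\uu$ from \eqref{e2.22} by Duhamel's principle.

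For each of \eqref{e2.24} and \eqref{e2.25} I would use the vanishing viscosity method in the spirit of the proof of Theorem~\ref{T:1.1}. Approximate \eqref{e2.24} by
\begin{equation*}
\theta_t^\eps - Z\theta_y^\eps = \a_1(g_1'(\theta^\eps)|\uu|^2)_y + \eps\theta_{yy}^\eps,
\end{equation*}
with mollified initial data $\theta_0^\eps$ still satisfying $a<\theta_0^\eps<b$, and similarly for $\z^\eps$. The compact-support hypothesis $\supp g_1'\subset[a,b]$ combined with a maximum-principle argument forces $a\le \theta^\eps(t,y)\le b$ (outside $[a,b]$ the equation reduces to pure linear transport), and likewise $c\le\z^\eps\le d$; a standard $L^2$ energy estimate yields $\sqrt{\eps}\,\theta_y^\eps$ bounded in $L^2((0,T)\X\R)$. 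Multiplying the viscous equation by $\eta'(\theta^\eps)$ for convex $\eta\in C^2(\R)$ and exploiting $(|\uu|^2)_y\in L^2((0,T)\X\R)$, the Murat--Tartar machinery places $\eta(\theta^\eps)_t+\po_y q_\eta^\eps$ in a compact subset of $W_{\loc}^{-1,2}((0,T)\X\R)$. The non-degeneracy condition \eqref{e2.28} on $g_1'''$ then lets Tartar's compensated compactness theorem deliver a.e.\ convergence of a subsequence of $\theta^\eps$ to a limit $\theta\in L^\infty((0,T)\X\R)$, and analogously $\z^\eps\to\z$. Testing the viscous entropy identity against $|\uu|^2\varphi$ with $\varphi\ge 0$ in $C_c^\infty$ and passing to the limit yields exactly the restricted entropy inequalities \eqref{e2.entr}; the second and third integral identities in \eqref{eD2.1} follow by taking $\eta$ affine.

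With $\theta$ and $\z$ in hand, equation \eqref{e2.22} is solved on $[0,T]$ by the Duhamel formula against the unitary group $S(t)=\exp(t\fka\po_y)$ on $L^2(\R)$: since $\l U+\a_1 g_1(\theta)+\a_2 g_2(\z)\in L^\infty((0,T)\X\R)$, a contraction mapping argument in $C([0,T_0];L^2(\R))$ for small $T_0$, iterated up to $T$, produces $\uu\in C([0,T];L^2(\R))$ satisfying the first identity of \eqref{eD2.1}. For the stability assertion, the uniform $H^1$ bound on $\uu_0^n$ together with the distributional convergence $\uu_0^n\to\uu_0$ implies, via \eqref{e1.1000} and \eqref{e1.1000'}, strong $L^2_{\loc}$ convergence of $|\uu^n|^2$ to $|\uu|^2$ and of $U^n$ to $U$; the sequences $(\theta^n,\z^n)$ are uniformly $L^\infty$-bounded and their entropy productions (from \eqref{e2.entr}) inherit $W_{\loc}^{-1,2}$-compactness from the strong convergence of $|\uu^n|^2$, so Tartar's theorem applies once more.

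\textbf{The main obstacle} is the restricted nature of \eqref{e2.entr}. Because the source in the $\theta$ and $\z$ equations involves only the $L^2$-derivative $(|\uu|^2)_y$, one cannot obtain an unweighted entropy inequality, and one must carefully verify that pairing against $|\uu|^2$ is enough to trigger Tartar's reduction of the associated Young measure to a Dirac mass on $\{|\uu|^2>0\}$. Outside that set the system is already linear, so no further reduction is required; it is precisely the fact that $|\uu|^2$ is determined explicitly by the initial data, and therefore converges strongly along any admissible approximating sequence, that makes the weighted framework workable.
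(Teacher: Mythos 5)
Your proposal is correct and follows essentially the same route as the paper's own proof: decoupling via Theorem~\ref{T:100}, vanishing viscosity with maximum-principle invariant regions from the compact supports of $g_1',g_2'$, Murat--Tartar compensated compactness under \eqref{e2.28}, Duhamel for $\uu$, and the same weighted entropy framework with Young-measure reduction only on $\{|\uu|^2>0\}$ for both existence and stability. The one caveat is that your interim claim of a.e.\ convergence of $\theta^\eps$ should be restricted from the outset to the set $\{|\uu|^2>0\}$, as you in fact acknowledge in your closing paragraph and as the paper states explicitly.
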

 
 \begin{proof} 1. The proof is very similar to the proof of Theorem~\ref{T:1.1}, we only point out some points where the proof differs slightly from that one. Again,  we apply Theorem~\ref{T:100} from which it follows that $|\uu|^2$ is determined by $\uu_0$.
 So, first we approximate the solution of \eqref{e2.24}-\eqref{e2.25} by solving the problem
\begin{align}
& \theta_t-Z \theta_y=\a_1 (g_1'(\theta)|\uu|^2)_y+\ve \theta_{yy},\label{e2.71}\\
&\z_t+Z\z_y=\a_2 (g_2'(\z)|\uu|^2)_y + \ve\z_{yy},\label{e2.72}\\
&\theta(0,x)=\theta_0^\ve(x),\label{e2.73}\\
&\z(0,x)=\z_0^\ve(x),\label{e2.74}
 \end{align}
 where $\theta_0^\ve=\rho_\ve*\theta_0$, $\z_0^\ve=\rho_\ve*\z_0$, with $\rho_\ve$ as before. Denoting $\theta^\ve,\z^\ve$ the solution of \eqref{e2.71}-\eqref{e2.74}, by the assumption that $\supp g_1'\subset [a,b]$, $\supp g_2'\subset[c,d]$, and \eqref{e2.27},  using standard maximum principle arguments, we deduce the a priori estimate 
 \begin{equation}\label{e2.75}
 a\le \theta(t,x)\le b,\quad c\le \z(t,x)\le d.
 \end{equation}
 Again, for $\eta\in C^2(\R)$ convex we get
 \begin{multline}\label{e2.76}
\eta(\theta^\ve)_t - \left(\int_0^{\theta^\ve(t,y)} Z \xi \eta'(\xi)\,d\xi -\a_1 |\uu|^2\int_0^{\theta^\ve(t,y)} \eta'(\xi)g''(\xi)\,d\xi   \right)_y\\ 
 =\ve (\eta(\theta^\ve))_{yy}-\ve\eta''(\theta^\ve)|\theta_y^\ve|^2+ \a_1  \left(\eta'(\theta^\ve)g_1'(\theta^\ve)-\int_0^{\theta^\ve(t,y)}\eta'(\xi) g_1''(\xi)\,d\xi\right)(|\uu|^2)_y,
 \end{multline}
 \begin{multline}\label{e2.77}
 \eta(\z^\ve)_t +\left(\int_0^{\z^\ve(t,y)} Z \xi \eta'(\xi)\,d\xi -\a_2 |\uu|^2\int_0^{\z^\ve(t,y)} \eta'(\xi)g_2''(\xi)\,d\xi   \right)_y\\ 
 =\ve (\eta(\z^\ve))_{yy}-\ve\eta''(\z^\ve)|\z_y^\ve|^2+ \a_2  \left(\eta'(\z^\ve)g_2'(\z^\ve)-\int_0^{\z^\ve(t,y)}\eta'(\xi) g_2''(\xi)\,d\xi\right)(|\uu|^2)_y,
\end{multline}
 Again, for all $T>0$, and get  
\begin{equation}\label{e2.78}
 \int_{(0,T)\X\R}\ve (|\theta_y^\ve|^2+|\z^\ve_y|^2)\,dy\,dt\le C(T),
 \end{equation}
for some $C(T)>0$ independent of $\ve$. 
Denoting
\begin{align*}
&q_\eta^1(\theta;|\uu|^2):=\int_0^{\theta^\ve(t,y)} -Z \xi \eta'(\xi)\,d\xi -\a_1 |\uu|^2\int_0^{\theta^\ve(t,y)} \eta'(\xi)g_1''(\xi)\,d\xi,\\
&q_\eta^2(\z;|\uu|^2):=\int_0^{\z^\ve(t,y)} Z \xi \eta'(\xi)\,d\xi -\a_2 |\uu|^2\int_0^{\z^\ve(t,y)} \eta'(\xi)g_2''(\xi)\,d\xi,
\end{align*}  
Again, from \eqref{e2.78}, it follows,  for all $\eta\in C^2(\R)$,  
\begin{align*}
&\eta(\theta^\ve)_t+\po_x q_\eta^1(\theta^\ve;|\uu|^2)\in \text{compact in $W_\loc^{-1,2}((0,T)\X\R)$},\\
&\eta(\z^\ve)_t+\po_x q_\eta^2(\z^\ve;|\uu|^2)\in \text{compact in $W_\loc^{-1,2}((0,T)\X\R)$}.
\end{align*}
Again, applying Tartar's compensated compactness argument in \cite{Ta}, using the non-degeneracy condition \eqref{e2.28}, we obtain the convergence in the sense of distributions in $(0,\infty)\X\R)$ of a subsequence of $(\theta^\ve,\z^\ve)$,
also denoted $(\theta^\ve,\z^\ve)$, to a pair of functions 
$$
(\theta,\z)\in \left((L^1\cap L^\infty)((0,T)\X\R)\right)^2, \quad\text{for all $T>0$}.
$$
The convergence is in $L^1_\loc$ on the set $\{(t,y)\,:\, |\uu|^2>0\}$, where the Young measure generated by the referred subsequence reduces to a Dirac measure. With $(\theta,\z)$ at hand,
we solve the initial value problem for $\uu$ following the same procedures as in the last section.

  The second part of the statement also follows as in the last section by noticing that a subsequence of $(|\uu^n|^2)$  converges in $L^2((0,T)\X\R)$,  and  $(\theta^n,\z^n)$ is bounded in $(L^1\cap L^\infty)((0,T)\X\R)$,
and the fact that the inequalities obtained from  \eqref{e2.entr} applied to 
$(\uu^n,\theta^n,\z^n)$
imply, as explained in the last section,   that   
 \begin{align*}
&\eta(\theta^n)_t+\po_x q_\eta^1(\theta^n;|\uu^n|^2)\in \text{compact in $W_\loc^{-1,2}((0,T)\X\R)$},\\
&\eta(\z^n)_t+\po_x q_\eta^1(\z^n;|\uu^n|^2)\in \text{compact in $W_\loc^{-1,2}((0,T)\X\R)$}.
\end{align*} 
Therefore, as in the last section, we can apply again Tartar's compensated compactness arguments in \cite{Ta}, to conclude the proof of  the final assertion.

  \end{proof}

\section*{Acknowledgements} 

The first author would like to thank   Orfeu Bertolami, Jos\'e Pedro Mimoso and Vladimir Konotop  for encouraging comments about the model introduced in this paper.


\begin{thebibliography}{999}

\bibitem{Ba} A.~Bachelot. {\sl Global existence of large amplitude solutions for nonlinear massless Dirac equation}. Portugaliae Mathematica, {\bf46} (1989), 
455--473.

\bibitem{BOP} D.~Bekiranov, T.~Ogawa, G.~Ponce. {\sl Weak solvability and well-posedness of a coupled Schr\u odinger--Korteweg De Vries equation for capillary-gravity wave interactions}. Proc.\ Am.\ Math.\ Soc.\ {\bf 125}(10) (1997), 2907--2919. 

\bibitem{Be} D.J.~Benney. {\sl A general theory for interactions between short and long waves}. Studies in Applied Mathematics {\bf 56} (1977), 81--94. 	

\bibitem{Br} Y.~Brenier. {\sl Hydrodynamic structure of the augmented Born-Infeld equations}. 
Arch.\ Rational Mech.\ Anal.\ {\bf172} (2004), 65--91.

\bibitem{Ca} T.~Candy. {\sl Global existence for an $L^2$ critical nonlinear Dirac equation in one dimension}.
Adv.~Differential Equations {\bf16} (2011), no. 7-8, 643--666.

\bibitem{CH} T.~Candy, S.~Herr. {\sl On the Majorana condition for nonlinear Dirac systems}. Ann.~I.~H.~Poincar\'e -- AN {\bf35} (2018) 1707--1717.
 

\bibitem{CG} J.M.~Chadam, R.T.~Glassey. {\sl On certain global solutions for the (classical) coupled 
Klein-Gordon-Dirac equations in one and three space dimensions.} Arch.\ Rational Mach.\ Anal.\ {\bf 54} (1974),
223--237. 

\bibitem{Da} C.~Dafermos. ``Hyperbolic Conservation Laws in Continuum Physics''. Third Edition.
Springer-Verlag, 2010.  

\bibitem{Del} V.~Delgado. {\sl Global solutions of the Cauchy problem for the (classical) coupled Maxwell-Dirac and other nonlinear Dirac equations in one space dimension}. Proceedings of the American Mathematical Society {\bf69}, No.~2, 289--296. 


\bibitem{DF1} J.-P.\ Dias, M.~Figueira. {\sl Time decay for the solutions of a nonlinear Dirac equation
in one space dimension}.  Ricerche di Matematica, {\bf 35}, No.2 (1986), 309--316.


\bibitem{DF2} J.-P.\ Dias, M.~Figueira. {\sl Existence of weak solutions for a quasilinear version of Benney equations.} J.\ Hyperbolic Differ.\ Equ.\ {\bf4} (2007), no. 3, 555--563. 

\bibitem{DFF}  J.-P.~Dias, M.~Figueira and H.~Frid. {\sl Vanishing viscosity with short wave long wave interactions for systems of conservation laws}.   Arch.\ Ration.\ Mech.\ Anal.\  {\bf196}  (2010),  no. 3, 981--1010. 
	
\bibitem{DF} J.-P.~Dias and H.~Frid. {\sl Short wave-long wave interactions for compressible Navier-Stokes equations}. SIAM J.\ Math.\ Anal., {\bf43} (2011), 764--787.

\bibitem{Di} P.A.M.~Dirac. ``The Principles of Quantum Mechanics''. Fourth Edition (Revised). Clarendon Press-Oxford. Oxford University Press, 1958. 

\bibitem{FJP}  H.~Frid, J.~Jia, R.~Pan. {\sl Global smooth solutions in R3 to short wave–long wave interactions in magnetohydrodynamics}, J. Differential Equations 262 (2017), no. 7, 4129–4173.	

\bibitem{FMN} H.~Frid, D.R.~Marroquin, J.F.C.~ Nariyoshi. {\sl Global smooth solutions with large data for a system modeling aurora type phenomena in the 2-torus}. SIAM J.\ Math.\ Anal.\ {\bf53} (2021), no.\ 1, 1122--1167.	
	
\bibitem{FMP} H. Frid, D.R.~Marroquin, R.~Pan. {\sl Modeling aurora type phenomena by short wave-long wave interactions in multidimensional large magnetohydrodynamic flows}. SIAM J.\ Math.\ Anal., 50(6) (2018), 6156--6195.
	
\bibitem{FPZ} H.~Frid, R.~Pan and W.~Zhang. {\sl Global smooth solutions in R3 to short wave-long wave interactions systems for viscous compressible fluids}, SIAM J. Math. Anal., Vol. 46, No. 3 (2014), pp. 1946--1968.	

\bibitem{Ho} L.~H\"ormander. ``Lectures on Nonlinear Hyperbolic Differential Equations''. Springer-Verlag, 1996.

\bibitem{HW} S.~Huo, C.~Wei. {\sl Classical solutions to relativistic Burgers equations in FLRW space-times.} 
Science China Mathematics {\bf 63}, No.~2 (2020), 357--370. 

\bibitem{KM} E.A.~Kuznetsov, A.V.~Mikhailov. {\sl On the complete integrability of the two-dimensional classical Thirring model}. Theoretical and Mathematical Physics {\bf30} (1977), 193--200.

\bibitem{LFMO} Ph.~Lefloch, H.~Makhlof, B.~Okutmustur. {\sl Relativistic Burgers equations on curved spacetimes. 
Derivation and finite volume approximation.} SIAM J.\ Numer.\ Anal.\ {\bf50}, No.~4 (2012), 2136--2158.  

\bibitem{MO} S.~Machihara, T.~Omoso. {\sl The explicit solutions to the nonlinear Dirac equation and 
Dirac-Klein-Gordon equation}. Ricerche di Matematica {\bf 56} (2007), 19--30. 

\bibitem{DRM} D.R.~Marroquin. {\sl Vanishing viscosity limit of short wave-long wave interactions in planar magnetohydrodynamics}, J.\ Differential Equations {\bf 266} (2019), no.~12, 8110--8163.

\bibitem{Mu} F.~Murat. {\sl L'injection du c\^one positif de $H^{-1}$ dans $W^{-1,q}$ est compacte pour tout q<2.} (French) [The injection of the positive cone of $H^{-1}$ in $W^{-1,q}$ is completely continuous for all q<2] J.\ Math.\ Pures Appl.\ (9) {\bf60} (1981), no.~3, 309--322.

\bibitem{NS} W.~Neves, D.~Serre {\sl The incompleteness of  the Born-Infeld model for nonlinear multi-d
Maxwell's Equations}. Quart.\ Appl.\ Math.\ {\bf LXIII} (2005), 343--368.

\bibitem{Se} D.~Serre. ``Systems of conservation laws'', 2. Ch.~9.6 and 10.1, Cambridge University Press, 2000.


\bibitem{St} W.~Strauss. {\sl Nonlinear invariant wave equation}; in ``Invariant Wave Equations'', Proceedings of the "Etore Majorana" International School of Physics Held in  
in Erice, June 27 to July 9, 1977, 197--249.


\bibitem{Ta} L.~Tartar. {\sl Compensated compactness and applications to partial differential equations}. In: Nonlinear Analysis and Mechanics: Heriot-Watt Symposium, vol. IV, pp.136--212. Res. Notes in Math., 39. Pitman, Boston, 1979

\bibitem{Th} W.~Thirring. {\sl A soluble relativistic field theory}. Annals of Physics, {\bf 3} (1958), 91--112.

\bibitem{TH} M.~Tsutsumi, S.~Hatano.{\sl Well-posedness of the Cauchy problem for the long wave-short wave resonance equations}. Nonlinear Anal.\ Theory Methods Appl.\ {\bf22} (2) (1994), 155--171.

\bibitem{ZZ} Y.~Zhang, Q.~Zhao. {\sl Global solution to nonlinear Dirac equation for
Gross-Neveu  model in 1+1 dimensions. } Nonlinear Analysis {\bf 118} (2015), 82--96. 



\end{thebibliography}
\end{document}